\theoremstyle{definition}
\newtheorem{theorem}{Theorem}[section]
\newtheorem{lemma}[theorem]{Lemma}
\newtheorem{corollary}[theorem]{Corollary}
\newtheorem{definition}[theorem]{Definition}
\newtheorem{example}[theorem]{Example}
\theoremstyle{remark}
\newtheorem{remark}[theorem]{Remark}
\numberwithin{equation}{section}
\title{\bf On Reversing Operator Choi-Davis-Jensen inequality}
\author{$^{1}$Seyyed Saeid Hashemi Karouei\\
$^{2}$Mohammad Sadegh Asgari$^*$\\$^{3}$Mohsen Shah Hosseini}
\date{\small $^{1, 2}$Department of Mathematics, Faculty of Science,
Central Tehran Branch,\\ Islamic Azad University, Tehran, Iran.\\
$^{3}$Department of Mathematics, Shahr-e-Qods Branch,\\
Islamic Azad University, Tehran, Iran.\\ $^{1}$seyyed.s.hashemi@gmail.com\\
$^{2}$moh.asgari@iauctb.ac.ir\\
$^{3}$mohsen\_shahhosseini@yahoo.com }
\newcommand{\subject}[1]{\begin{flushleft}
\textbf{2010 AMS Subject Classification}: #1\end{flushleft}}
\newcommand{\keyword}[1]{\par\noindent \textbf{Keywords:} #1}
\newcommand{\eval}[2][\right]{\relax
\ifx#1\right\relax \left.\fi#2#1\rvert}
\renewcommand{\sectionmark}[1]{}
\begin{document}
\maketitle

\begin{abstract}
\noindent
In this paper, we first provide a better estimate of the second inequality in Hermite-Hadamard inequality.
Next, we study the reverse of the celebrated Davis-Choi-Jensen's inequality. Our results are employed to
establish a new bound for the operator Kantorovich inequality.

\vspace{.3cm}
\keyword{Hermite-Hadamard inequality; Davis-Choi-Jensen inequality; convex function;
self-adjoint operator; positive operator.}
\subject{Primary: 47A12; 47A30 Secondary: 26D15.}
\end{abstract}

\section{Introduction and Preliminaries}

A very interesting inequality for convex functions that has been widely studied in the literature is due to Hermit and Hadamard.
It provides a two-sided estimate of the mean value of a convex function. The Hermite-Hadamard inequality (for briefly, H-H inequality)
has several applications in the nonlinear analysis and geometry of Banach spaces, see \cite{bar}. Over the past decades, several
interesting generalizations, specific cases, and formulations of this remarkable inequality have been obtained for some different frameworks. In fact, it provides a necessary and sufficient condition for a function $f$ to be convex. Many well-known inequalities
can be obtained using the concept of convex functions. For details, interested readers can refer to \cite{r3, r4, r1, r2}.
This remarkable result of Hermit and Hadamard is as follows:
\vskip 0.4 true cm
\noindent{\bf Theorem A.} Let $f:J\to \mathbb{R}$ be a convex function, where $a,b\in J$ with $a<b$. Then
\begin{align}\label{eq1} f\big(\frac{a+b}{2}\big)\le \frac{1}{b-a}\int_a^b f(z)dz\le \frac{f(a)+f(b)}{2}.\end{align}
A history of this inequality can be found in \cite{k2}. An overview of the generalities and various developments can be found in
\cite {k1}.\par  In this paper, we first provide a better estimate of the second inequality in H-H inequality (\ref{eq1}).\par
Throughout this paper $\mathcal{H}$ and $\mathcal{K}$ are complex Hilbert spaces, and
$\mathcal{B}(\mathcal{H})$ denotes the algebra of all (bounded linear) operators on $\mathcal{H}$. Recall that an operator
$A$ on $\mathcal{H}$ is said to be positive (in symbol: $A\ge 0$) if $\langle Ax, x \rangle\ge 0$ for all $x \in \mathcal{H}$.
We write $A>0$ if $A$ is positive and invertible. For self-adjoint operators $A$ and $B$, we write $A\ge B$ if $A-B$ is positive,
i.e., $\langle Ax, x \rangle\ge\langle Bx, x\rangle$ for all $x \in\mathcal{H}$. We call it the usual order. In particular, for
some scalars $m$ and $M$, we write $m\le A\le M$ if
\begin{align*} m\langle x, x\rangle\le\langle Ax, x\rangle\le M\langle x, x \rangle, \hspace{1cm}\forall\;
x \in\mathcal{H}. \end{align*}
We extensively use the continuous functional calculus for self-adjoint operators, e.g., see \cite[p. 3]{book}.
\begin{definition}\label{def1.1}
A continuous function $f$ defined on the interval $J$ is called an operator convex function if
\begin{align*} f\big((1-v)A+vB \big)\le(1-v)f(A)+vf( B)\end{align*}
for every $0<v<1$ and for every pair of bounded self-adjoint operators $A$ and $B$ whose spectra are both in $J$.
\end{definition}
\begin{definition}\label{def1.2}
A linear map $\Phi: \mathcal{B}(\mathcal{H})\to \mathcal{B}(\mathcal{K})$ is positive if
$\Phi \left( A \right)$ is positive for all positive $A$ in $\mathcal{B}\left( \mathcal{H} \right)$. It is said to be
unital (or, normalized) if $\Phi \left( \mathbf{1}_\mathcal{H} \right)=\mathbf{1}_\mathcal{K}$.\end{definition}
We recall the Davis-Choi-Jensen inequality \cite{choi, davis} for operator convex functions, which is regarded as a
noncommutative version of Jensen's inequality:
\vskip 0.4 true cm
\noindent{\bf Theorem B.} Let $A\in \mathcal{B}\left( \mathcal{H} \right)$ be a self-adjoint operator with the spectra
contained in the interval $J$ and $\Phi $ be a positive unital linear map from $\mathbb{B}\left( \mathcal{H} \right)$ to
$\mathbb{B}\left( \mathcal{K} \right)$. If $f$ is operator convex function on an interval $J$, then \begin{equation}\label{cdj}
f\left( \Phi \left( A \right) \right)\le \Phi \left( f\left( A \right) \right).
\end{equation}
\indent Though in the case of convex function the inequality \eqref{cdj} does not hold in general, we have the following estimate
from \cite[Remark 4.14]{micic}.
\vskip 0.4 true cm
\noindent{\bf Theorem C.} Let $A\in \mathcal{B}\left( \mathcal{H} \right)$ be a self-adjoint operator with
$Sp\left( A \right)\subseteq \left[ m,M \right]$ for some scalars $m,M$ with $m<M$ and $\Phi $ be a positive unital
linear map from $\mathbb{B}(\mathcal{H})$ to $\mathbb{B}(\mathcal{K})$. If $f$ is non-negative
convex function, then \begin{equation}\label{eq2}
\frac{1}{\alpha}\Phi\big(f(A)\big)\le f\big(\Phi(A)\big)\le\alpha\Phi\big(f(A)\big),\end{equation}
where $\alpha$ is defined by
\begin{align*}\alpha =\max_{m\le t\le M}\Big\{\frac{1}{f(t)}\Big(\frac{M-t}{M+m}f(m)+\frac{t-m}{M+m}f(M)\Big)
\Big\}.\end{align*}

For other generalizations of inequality, we refer the interested readers to \cite{mmf, mmf1, mgh, 2}. One of our main aim
in this article is to improve the first inequality in the above. For this purpose, we use some ideas from \cite[Theorem 3.4]{1}.

In this paper, we first provide a better estimate of the second inequality in Hermite-Hadamard inequality.
Our results are employed to establish a new bound for the operator Kantorovich inequality. In particular,
we show that

\begin{footnotesize} \begin{align}\Phi(A^{-1})\le \Phi\Big(\int_0^1\Big(\frac{M\mathbf{1}_{\mathcal{H}}
-A}{M-m}m^{-t}+\frac{A-m\mathbf{1}_{\mathcal{H}}}{M-m}M^{-t}\Big)^{\frac{1}{t}}dt\Big)\le \frac{(M+m)^2}{4Mm}
\Phi^{-1}(A),\end{align}\end{footnotesize} where $m{{\mathbf{1}}_{\mathcal{H}}}\le A\le M{{\mathbf{1}}_{\mathcal{H}}}$
and $\Phi$ is a positive linear map.

\section{Main Results}

First we start our work by providing a better estimate for H-H inequality (\ref{eq1}).
\begin{theorem}\label{10}
Let $f:J\to (0, \infty)$ be a continuous  function on the interval $J$ such that $f^t$ is convex for all $0<t<1$.  Then for any
$a,b \in J$,
\begin{small}\begin{align*} f\big(\frac{a+b}{2}\big)&\le\frac{1}{b-a}\int_a^b f(z)dz \le \int_a^b\Big(\frac{z-a}{b-a}f^t(a)+
\frac{b-z}{b-a} f^t(b)\Big)^{\frac{1}{t}}dz \le\frac{f(a)+f(b)}{2}.
\end{align*}\end{small}
\end{theorem}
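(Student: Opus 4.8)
The plan is to split the displayed chain into its three constituent inequalities and dispatch each by convexity, treating every integral as a mean so that the middle term reads $\frac{1}{b-a}\int_a^b(\cdots)^{1/t}\,dz$; this is what makes the chain a refinement of the classical bound $\frac{1}{b-a}\int_a^b f(z)\,dz\le\frac{f(a)+f(b)}{2}$. For the leftmost inequality I would first observe that the hypothesis forces $f$ itself to be convex: as $f$ is positive and continuous, $f^t\to f$ pointwise as $t\to 1^-$, and a pointwise limit of convex functions is convex, so $f\big(\frac{a+b}{2}\big)\le\frac{1}{b-a}\int_a^b f(z)\,dz$ is exactly the left half of Theorem A and needs nothing further.

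The heart of the argument is a single pointwise sandwich handling the remaining two inequalities together. Fix $z\in[a,b]$ and write $z=(1-\lambda)a+\lambda b$ with $\lambda=\frac{z-a}{b-a}$. Convexity of $f^t$ gives $f^t(z)\le(1-\lambda)f^t(a)+\lambda f^t(b)$, and raising both sides to the power $1/t$ (increasing on $(0,\infty)$) yields $f(z)\le\big((1-\lambda)f^t(a)+\lambda f^t(b)\big)^{1/t}$. On the other hand, since $0<t<1$ we have $1/t>1$, so $s\mapsto s^{1/t}$ is convex and therefore $\big((1-\lambda)f^t(a)+\lambda f^t(b)\big)^{1/t}\le(1-\lambda)f(a)+\lambda f(b)$. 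Thus the interpolating quantity is trapped pointwise between $f(z)$ and the linear interpolant $(1-\lambda)f(a)+\lambda f(b)$.

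Integrating this pointwise chain over $[a,b]$ and dividing by $b-a$ then produces all three remaining comparisons at once: the outer terms become $\frac{1}{b-a}\int_a^b f(z)\,dz$ and, since $\frac{1}{b-a}\int_a^b(1-\lambda)\,dz=\frac{1}{b-a}\int_a^b\lambda\,dz=\frac12$, the linear interpolant integrates to $\frac{f(a)+f(b)}{2}$. The substitution $z\mapsto a+b-z$ leaves the interpolating integral unchanged while swapping the two coefficients, so the middle term matches the labeling printed in the statement.

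The point deserving most care is the double role played by the constraint $0<t<1$: it is simultaneously what licenses the hypothesis that $f^t$ is convex and what guarantees $1/t>1$, hence the convexity of the outer power $s\mapsto s^{1/t}$; were $t\ge 1$ that power would be concave and the rightmost inequality would reverse. A secondary bookkeeping remark is that the normalizing factor $\frac{1}{b-a}$ must stand in front of the interpolating integral for the final comparison to close, so I would state and prove the chain with all three integral means normalized.
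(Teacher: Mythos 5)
Your proposal is correct and follows essentially the same route as the paper: the same pointwise sandwich $f(z)\le\bigl((1-\lambda)f^t(a)+\lambda f^t(b)\bigr)^{1/t}\le(1-\lambda)f(a)+\lambda f(b)$ obtained from convexity of $f^t$ and of $s\mapsto s^{1/t}$ (since $1/t>1$), followed by integration and the symmetry $z\mapsto a+b-z$. The only cosmetic differences are that the paper symmetrizes pointwise (adding the chains at $z$ and at $a+b-z$ and using $f\bigl(\tfrac{a+b}{2}\bigr)\le\tfrac{f(z)+f(a+b-z)}{2}$) before integrating, and deduces convexity of $f$ directly from the sandwich rather than from the limit $t\to1^-$; you also correctly supply the $\tfrac{1}{b-a}$ normalization that the printed middle term omits.
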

\begin{proof}
Since ${{f}^{t}}$ is a convex function, we have for any $x,y\in J$ and $v\in \left[ 0,1 \right]$
\begin{equation}\label{1}
{{f}^{t}}\left( \left( 1-v \right)x+vy \right)\le \left( 1-v \right){{f}^{t}}\left( x \right)+v{{f}^{t}}\left( y \right).
\end{equation}
Raising both sides of \eqref{1} to the power of ${1}/{t}\;$, we get
	\[\begin{aligned}
   f\left( \left( 1-v \right)x+vy \right)&={{\left( {{f}^{t}}\left( \left( 1-v \right)x+vy \right) \right)}^{\frac{1}{t}}} \\
 & \le {{\left( \left( 1-v \right){{f}^{t}}\left( x \right)+v{{f}^{t}}\left( y \right) \right)}^{\frac{1}{t}}} \\
 & \le \left( 1-v \right){{\left( {{f}^{t}}\left( x \right) \right)}^{\frac{1}{t}}}+v{{\left( {{f}^{t}}\left( y \right) \right)}^{\frac{1}{t}}} \\
 & =\left( 1-v \right)f\left( x \right)+vf\left( y \right).
\end{aligned}\]
Consequently,
\begin{equation}\label{2}
f\left( \left( 1-v \right)x+vy \right)\le {{\left( \left( 1-v \right){{f}^{t}}\left( x \right)+v{{f}^{t}}\left( y \right) \right)}^{\frac{1}{t}}}\le \left( 1-v \right)f\left( x \right)+vf\left( y \right),
\end{equation}
which shows the convexity of the function $f$.

Suppose  $z\in \left[ a,b \right]$. If we substitute $x=a$, $y=b$, and $1-v={\left( b-z \right)}/{\left( b-a \right)}\;$ in \eqref{2}, we get
\begin{equation}\label{3}
\begin{aligned}
   f\left( z \right)&\le {{\left( \frac{b-z}{b-a}{{f}^{t}}\left( a \right)+\frac{z-a}{b-a}{{f}^{t}}\left( b \right) \right)}^{\frac{1}{t}}} \\
 & \le \frac{b-z}{b-a}f\left( a \right)+\frac{z-a}{b-a}f\left( b \right).
\end{aligned}
\end{equation}
Since $z\in \left[ a,b \right]$, it follows that $b+a-z\in \left[ a,b \right]$. Now, applying the inequality \eqref{3} to the variable $b+a-z$, we get
\begin{equation}\label{4}
\begin{aligned}
   f\left( b+a-z \right)&\le {{\left( \frac{z-a}{b-a}{{f}^{t}}\left( a \right)+\frac{b-z}{b-a}{{f}^{t}}\left( b \right) \right)}^{\frac{1}{t}}} \\
 & \le \frac{z-a}{b-a}f\left( a \right)+\frac{b-z}{b-a}f\left( b \right).
\end{aligned}
\end{equation}
By adding inequalities \eqref{3} and \eqref{4}, we infer that
\[\begin{aligned}
  & f\left( b+a-z \right)+f\left( z \right) \\
 & \le {{\left( \frac{z-a}{b-a}{{f}^{t}}\left( a \right)+\frac{b-z}{b-a}{{f}^{t}}\left( b \right) \right)}^{\frac{1}{t}}}+{{\left( \frac{b-z}{b-a}{{f}^{t}}\left( a \right)+\frac{z-a}{b-a}{{f}^{t}}\left( b \right) \right)}^{\frac{1}{t}}} \\
 & \le \frac{z-a}{b-a}f\left( a \right)+\frac{b-z}{b-a}f\left( b \right)+\frac{b-z}{b-a}f\left( a \right)+\frac{z-a}{b-a}f\left( b \right) \\
 & =f\left( b \right)+f\left( a \right)
\end{aligned}\]
which, in turn, leads to
\begin{equation}\label{5}
\begin{aligned}
  & f\left( \frac{a+b}{2} \right) \\
 & \le \frac{f\left( a+b-z \right)+f\left( z \right)}{2} \\
 & \le \frac{1}{2}\left( {{\left( \frac{z-a}{b-a}{{f}^{t}}\left( a \right)+\frac{b-z}{b-a}{{f}^{t}}\left( b \right) \right)}^{\frac{1}{t}}}+{{\left( \frac{b-z}{b-a}{{f}^{t}}\left( a \right)+\frac{z-a}{b-a}{{f}^{t}}\left( b \right) \right)}^{\frac{1}{t}}} \right) \\
 & \le \frac{f\left( a \right)+f\left( b \right)}{2}.
\end{aligned}
\end{equation}
Now, the result follows by integrating the inequality \eqref{5} over $z\in \left[ a,b \right]$, and using the fact that $\int_{a}^{b}{f\left( z \right)dz}=\int_{a}^{b}{f\left( a+b-z \right)dz}$.
\end{proof}

\begin{remark}
It follows from the proof of Theorem \ref{10} that
\[\begin{aligned}
  & f\left( a+b-z \right) \\
 & \le {{\left( \frac{z-a}{b-a}{{f}^{t}}\left( a \right)+\frac{b-z}{b-a}{{f}^{t}}\left( b \right) \right)}^{\frac{1}{t}}}+{{\left( \frac{b-z}{b-a}{{f}^{t}}\left( a \right)+\frac{z-a}{b-a}{{f}^{t}}\left( b \right) \right)}^{\frac{1}{t}}}-f\left( z \right) \\
 & \le f\left( a \right)+f\left( b \right)-f\left( z \right).
\end{aligned}\]

This inequality improves \cite[Lemma 1.3]{3}.
\end{remark}

We introduce two notations that will be used in the sequel:
\[{{a}_{f}}\equiv \frac{f\left( M \right)-f\left( m \right)}{M-m}\text{ }\And \text{ }{{b}_{f}}\equiv \frac{Mf\left( m \right)-mf\left( M \right)}{M-m}.\]
The following result gives us a refinement of the first inequality in inequality (\ref{eq2}).

\begin{theorem}\label{6}
Let $A\in \mathcal{B}\left( \mathcal{H} \right)$ be a self-adjoint operator with the spectra contained in the interval $\left[ m,M \right]$ with $m<M$, and let $\Phi :\mathcal{B}\left( \mathcal{H} \right)\to \mathcal{B}\left( \mathcal{K} \right)$ be a positive unital linear
map. If $f:\left[ m,M \right]\to \left( 0,\infty  \right)$ is a continuous function  such that ${{f}^{t}}$ $t\in \left( 0,1 \right)$ is convex, then for a given real number $\alpha$
\[\begin{aligned}
   \Phi(f(A))&\le \Phi \left( {{\left( \frac{M{{\mathbf{1}}_{\mathcal{H}}}-A}{M-m}{{f}^{t}}\left( m \right)+\frac{A-m{{\mathbf{1}}_{\mathcal{H}}}}{M-m}{{f}^{t}}\left( M \right) \right)}^{\frac{1}{t}}} \right) \\
 & \le \beta {{\mathbf{1}}_{\mathcal{K}}}+\alpha f\left( \Phi \left( A \right) \right)
\end{aligned}\]
holds for $\beta =\underset{m\le t\le M}{\mathop{\max }}\,\left\{ {{a}_{f}}t+{{b}_{f}}-\alpha f\left( t \right) \right\}$.
\end{theorem}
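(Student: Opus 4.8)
The plan is to reduce the operator statement to the scalar sandwich inequality \eqref{3} already established inside the proof of Theorem \ref{10}, and then to transport it to operators by two separate applications of the continuous functional calculus together with the order-preservation of $\Phi$. The first thing I would do is specialize \eqref{3} to $a=m$, $b=M$, so that for every scalar $x\in[m,M]$,
\[
f(x)\le\left(\frac{M-x}{M-m}f^{t}(m)+\frac{x-m}{M-m}f^{t}(M)\right)^{\frac1t}\le\frac{M-x}{M-m}f(m)+\frac{x-m}{M-m}f(M).
\]
Since $\mathrm{Sp}(A)\subseteq[m,M]$, substituting $A$ for $x$ via the functional calculus turns this scalar chain into an operator chain on $\mathcal{H}$; applying the positive, hence order-preserving, map $\Phi$ then yields the first asserted inequality at once and, from the right-hand end, the bound
\[
\Phi\!\left(\left(\frac{M\mathbf{1}_{\mathcal{H}}-A}{M-m}f^{t}(m)+\frac{A-m\mathbf{1}_{\mathcal{H}}}{M-m}f^{t}(M)\right)^{\frac1t}\right)\le\Phi\!\left(\frac{M\mathbf{1}_{\mathcal{H}}-A}{M-m}f(m)+\frac{A-m\mathbf{1}_{\mathcal{H}}}{M-m}f(M)\right).
\]

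Next I would simplify the right-hand side using the linearity and unitality of $\Phi$. Expanding and collecting the $\Phi(A)$- and $\mathbf{1}_{\mathcal{K}}$-terms, the definitions ${{a}_{f}}=(f(M)-f(m))/(M-m)$ and ${{b}_{f}}=(Mf(m)-mf(M))/(M-m)$ are arranged precisely so that this equals $a_f\,\Phi(A)+b_f\,\mathbf{1}_{\mathcal{K}}$. Thus the middle operator is dominated by the affine expression $a_f\,\Phi(A)+b_f\,\mathbf{1}_{\mathcal{K}}$, and it remains only to absorb this into $\beta\mathbf{1}_{\mathcal{K}}+\alpha f(\Phi(A))$.

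For the final step I would invoke the functional calculus a second time, now for the operator $\Phi(A)$. Because $\Phi$ is positive and unital, $m\mathbf{1}_{\mathcal{H}}\le A\le M\mathbf{1}_{\mathcal{H}}$ gives $m\mathbf{1}_{\mathcal{K}}\le\Phi(A)\le M\mathbf{1}_{\mathcal{K}}$, so $\mathrm{Sp}(\Phi(A))\subseteq[m,M]$. The defining property $\beta=\max_{m\le s\le M}\{a_f s+b_f-\alpha f(s)\}$ means exactly that the continuous scalar function $s\mapsto a_f s+b_f-\alpha f(s)-\beta$ is nonpositive on $[m,M]$; feeding $\Phi(A)$ into this inequality through the functional calculus yields $a_f\,\Phi(A)+b_f\,\mathbf{1}_{\mathcal{K}}-\alpha f(\Phi(A))\le\beta\mathbf{1}_{\mathcal{K}}$, which rearranges to the second asserted inequality. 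Chaining the two displays then completes the argument.

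I do not expect a genuine obstacle here; the proof is essentially bookkeeping once Theorem \ref{10} is in hand. The only points demanding care are (i) using the \emph{right-hand} half of \eqref{3} to obtain an affine majorant \emph{before} applying $\Phi$, since $\Phi$ interacts cleanly with affine expressions but not with the $1/t$-power; and (ii) keeping the two functional-calculus steps conceptually distinct — the first applied to $A$ on $\mathcal{H}$, the second to $\Phi(A)$ on $\mathcal{K}$ — and verifying at the second step that $\mathrm{Sp}(\Phi(A))\subseteq[m,M]$, so that the scalar maximum defining $\beta$ legitimately transfers to an operator inequality.
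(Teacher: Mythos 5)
Your proposal is correct and follows essentially the same route as the paper: specialize the scalar sandwich inequality to $[m,M]$, lift it to $A$ by functional calculus, apply $\Phi$ and rewrite the affine majorant as $a_f\Phi(A)+b_f\mathbf{1}_{\mathcal{K}}$, then bound $a_f\Phi(A)+b_f\mathbf{1}_{\mathcal{K}}-\alpha f(\Phi(A))$ by $\beta\mathbf{1}_{\mathcal{K}}$ via a second functional-calculus step on $\Phi(A)$. Your write-up is in fact slightly more careful than the paper's, which leaves the spectral localization $\mathrm{Sp}(\Phi(A))\subseteq[m,M]$ implicit and contains a minor $=$/$\le$ transposition in its display.
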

\begin{proof}
We first observe that the assumptions imply that for any $m\le z\le M$,
\begin{equation}\label{11}
\begin{aligned} f(z)&\le {{\left( \frac{M-z}{M-m}{{f}^{t}}\left( m \right)+\frac{z-m}{M-m}{{f}^{t}}\left( M \right)
\right)}^{\frac{1}{t}}} \\  & \le \frac{M-z}{M-m}f\left( m \right)+\frac{z-m}{M-m}f\left( M \right).
\end{aligned}
\end{equation}
Applying the continuous functional calculus for the operator $A$  whose spectrum is contained in the interval $\left[ m,M \right]$,
\[\begin{aligned}
   f\left( A \right)&\le {{\left( \frac{M{{\mathbf{1}}_{\mathcal{H}}}-A}{M-m}{{f}^{t}}\left( m \right)+\frac{A-m{{\mathbf{1}}_{\mathcal{H}}}}{M-m}{{f}^{t}}\left( M \right) \right)}^{\frac{1}{t}}} \\
 & \le \frac{M{{\mathbf{1}}_{\mathcal{H}}}-A}{M-m}f\left( m \right)+\frac{A-m{{\mathbf{1}}_{\mathcal{H}}}}{M-m}f\left( M \right).
\end{aligned}\]
Since $\Phi$ is order preserving, we have
\[\begin{aligned}
   \Phi \left( f\left( A \right) \right)&\le \Phi \left( {{\left( \frac{M{{\mathbf{1}}_{\mathcal{H}}}-A}{M-m}{{f}^{t}}\left( m \right)+\frac{A-m{{\mathbf{1}}_{\mathcal{H}}}}{M-m}{{f}^{t}}\left( M \right) \right)}^{\frac{1}{t}}} \right) \\
 & =\Phi \left( \frac{M{{\mathbf{1}}_{\mathcal{H}}}-A}{M-m}f\left( m \right)+\frac{A-m{{1}_{H}}}{M-m}f\left( M \right) \right) \\
 & \le \frac{M{{\mathbf{1}}_{\mathcal{K}}}-\Phi \left( A \right)}{M-m}f\left( m \right)+\frac{\Phi \left( A \right)-m{{\mathbf{1}}_{\mathcal{K}}}}{M-m}f\left( M \right).
\end{aligned}\]
The above inequality can also be written as
\[\begin{aligned}
   \Phi \left( f\left( A \right) \right)&\le \Phi \left( {{\left( \frac{M{{\mathbf{1}}_{\mathcal{H}}}-A}{M-m}{{f}^{t}}\left( m \right)+\frac{A-m{{\mathbf{1}}_{\mathcal{H}}}}{M-m}{{f}^{t}}\left( M \right) \right)}^{\frac{1}{t}}} \right) \\
 & \le {{a}_{f}}\Phi \left( A \right)+{{b}_{f}}{{\mathbf{1}}_{\mathcal{K}}}.
\end{aligned}\]
Therefore,
\[\begin{aligned}
  & \Phi \left( f\left( A \right) \right)-\alpha f\left( \Phi \left( A \right) \right) \\
 & \le \Phi \left( {{\left( \frac{M{{\mathbf{1}}_{\mathcal{H}}}-A}{M-m}{{f}^{t}}\left( m \right)+\frac{A-m{{\mathbf{1}}_{\mathcal{H}}}}{M-m}{{f}^{t}}\left( M \right) \right)}^{\frac{1}{t}}} \right)-\alpha f\left( \Phi \left( A \right) \right) \\
 & \le {{a}_{f}}\Phi \left( A \right)+{{b}_{f}}{{\mathbf{1}}_{\mathcal{K}}}-\alpha f\left( \Phi \left( A \right) \right) \\
 & \le \underset{m\le t\le M}{\mathop{\max }}\,\left\{ {{a}_{f}}t+{{b}_{f}}-\alpha f\left( t \right) \right\}  \mathbf{1}_\mathcal{K}.
\end{aligned}\]
Consequently,
\[\begin{aligned}
   \Phi \left( f\left( A \right) \right)&\le \Phi \left( {{\left( \frac{M{{\mathbf{1}}_{\mathcal{H}}}-A}{M-m}{{f}^{t}}\left( m \right)+\frac{A-m{{\mathbf{1}}_{\mathcal{H}}}}{M-m}{{f}^{t}}\left( M \right) \right)}^{\frac{1}{t}}} \right) \\
 & \le \underset{m\le t\le M}{\mathop{\max }}\,\left\{ {{a}_{f}}t+{{b}_{f}}-\alpha f\left( t \right) \right\}\mathbf{1}_\mathcal{K}+\alpha f\left( \Phi \left( A \right) \right)
\end{aligned}\]
and this concludes the proof.
\end{proof}

We have Corollary \ref{7} if we put $\alpha =1$ in Theorem \ref{6} and Corollary \ref{8} if we
choose $\alpha$ such that $\beta =0$ in Theorem \ref{6}.
\begin{corollary}\label{7}
Let the hypothesis of Theorem \ref{6} be satisfied. Then
\[\begin{aligned}
   \Phi \left( f\left( A \right) \right)&\le \Phi \left( {{\left( \frac{M{{\mathbf{1}}_{\mathcal{H}}}-A}{M-m}{{f}^{t}}\left( m \right)+\frac{A-m{{\mathbf{1}}_{\mathcal{H}}}}{M-m}{{f}^{t}}\left( M \right) \right)}^{\frac{1}{t}}} \right) \\
 & \le \beta {{\mathbf{1}}_{\mathcal{K}}}+f\left( \Phi \left( A \right) \right),
\end{aligned}\]
where
\[\beta =\underset{m\le t\le M}{\mathop{\max }}\,\left\{ \frac{M-t}{M+m}f\left( m \right)+\frac{t-m}{M+m}f\left( M \right)-f\left( t \right) \right\}.\]
\end{corollary}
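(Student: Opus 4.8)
The plan is to obtain this corollary as the single specialization $\alpha=1$ of Theorem \ref{6}; no new analytic ingredient is required, so the task reduces to checking that the constant $\beta$ takes the stated form.

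First I would set $\alpha=1$ throughout the conclusion of Theorem \ref{6}. The left-hand inequality $\Phi(f(A))\le\Phi((\cdots)^{1/t})$ does not involve $\alpha$, so it transfers verbatim. On the right-hand side the term $\alpha f(\Phi(A))$ collapses to $f(\Phi(A))$, producing exactly the asserted chain $\Phi(f(A))\le\Phi((\cdots)^{1/t})\le\beta\mathbf{1}_{\mathcal K}+f(\Phi(A))$.

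Second I would evaluate the constant. Theorem \ref{6} gives $\beta=\max_{m\le t\le M}\{a_f t+b_f-\alpha f(t)\}$, which at $\alpha=1$ becomes $\beta=\max_{m\le t\le M}\{a_f t+b_f-f(t)\}$. The only genuine computation is to recognize the affine map $t\mapsto a_f t+b_f$ as the chord of $f$ joining $(m,f(m))$ to $(M,f(M))$: substituting $a_f=\frac{f(M)-f(m)}{M-m}$ and $b_f=\frac{Mf(m)-mf(M)}{M-m}$ and collecting over the common denominator $M-m$ gives
\[
a_f t+b_f=\frac{M-t}{M-m}\,f(m)+\frac{t-m}{M-m}\,f(M).
\]
I would confirm this by checking that the endpoints $t=m$ and $t=M$ return $f(m)$ and $f(M)$ respectively. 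Inserting this expression into the maximum then produces the displayed formula for $\beta$.

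There is no real obstacle here, since the statement is a direct corollary. The single point that warrants care is the normalization in the chord identity: my computation necessarily returns the denominator $M-m$, because $a_f t+b_f$ must interpolate $f$ at $t=m$ and $t=M$ where the secant normalization $M-m$ is forced by the definitions of $a_f$ and $b_f$. Consequently I would read the $M+m$ printed in the stated $\beta$ as a typographical slip for $M-m$, which is the value delivered by the specialization.
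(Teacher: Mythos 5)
Your proof is exactly the paper's: the authors obtain Corollary \ref{7} simply by setting $\alpha=1$ in Theorem \ref{6}, and the chord identity $a_f t+b_f=\frac{M-t}{M-m}f(m)+\frac{t-m}{M-m}f(M)$ is the only computation involved. You are also right that the denominators $M+m$ in the printed $\beta$ are a typographical slip for $M-m$ (the same slip appears in the quoted Theorem C and in Corollary \ref{8}); the specialization forces $M-m$, as your endpoint check at $t=m$ and $t=M$ confirms.
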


\begin{corollary}\label{8}
Let the hypothesis of Theorem \ref{6} be satisfied. Then
\[\begin{aligned}
   \Phi \left( f\left( A \right) \right)&\le \Phi \left( {{\left( \frac{M{{\mathbf{1}}_{\mathcal{H}}}-A}{M-m}{{f}^{t}}\left( m \right)+\frac{A-m{{\mathbf{1}}_{\mathcal{H}}}}{M-m}{{f}^{t}}\left( M \right) \right)}^{\frac{1}{t}}} \right) \\
 & \le \alpha f\left( \Phi \left( A \right) \right),
\end{aligned}\]
where
\[\alpha =\underset{m\le t\le M}{\mathop{\max }}\,\left\{ \frac{1}{f\left( t \right)}\left( \frac{M-t}{M+m}f\left( m \right)+\frac{t-m}{M+m}f\left( M \right) \right) \right\}.\]
\end{corollary}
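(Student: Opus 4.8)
The plan is to obtain Corollary~\ref{8} as a specialization of Theorem~\ref{6}, tuning the free parameter $\alpha$ so that the additive term disappears. Theorem~\ref{6} already supplies, for \emph{every} real $\alpha$, the chain
\[
\Phi\left(f(A)\right)\le \Phi\left(\left(\tfrac{M\mathbf{1}_{\mathcal{H}}-A}{M-m}f^{t}(m)+\tfrac{A-m\mathbf{1}_{\mathcal{H}}}{M-m}f^{t}(M)\right)^{\frac{1}{t}}\right)\le \beta\,\mathbf{1}_{\mathcal{K}}+\alpha f\left(\Phi(A)\right),
\]
in which the middle operator does not depend on $\alpha$ and $\beta=\max_{m\le t\le M}\{a_f t+b_f-\alpha f(t)\}$. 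Since only the summand $\beta\mathbf{1}_{\mathcal{K}}$ obstructs the clean inequality $\Phi(f(A))\le \Phi(\cdots)\le \alpha f(\Phi(A))$, the whole task reduces to choosing $\alpha$ so that $\beta=0$.

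First I would solve the scalar equation $\beta=0$. Because $f$ is strictly positive on the compact interval $[m,M]$, the pointwise inequality $a_f t+b_f-\alpha f(t)\le0$ is equivalent to $\alpha\ge (a_f t+b_f)/f(t)$, and therefore
\[
\max_{m\le t\le M}\{a_f t+b_f-\alpha f(t)\}=0\quad\Longleftrightarrow\quad \alpha=\max_{m\le t\le M}\frac{a_f t+b_f}{f(t)}.
\]
The maximum on the right exists because $t\mapsto (a_f t+b_f)/f(t)$ is continuous on $[m,M]$. Substituting $a_f=\frac{f(M)-f(m)}{M-m}$ and $b_f=\frac{Mf(m)-mf(M)}{M-m}$ and simplifying shows that $a_f t+b_f=\frac{M-t}{M-m}f(m)+\frac{t-m}{M-m}f(M)$ is precisely the chord joining $(m,f(m))$ and $(M,f(M))$, which displays the chosen $\alpha$ in the closed form of the statement.

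With this value of $\alpha$ the term $\beta\mathbf{1}_{\mathcal{K}}$ will vanish identically and Theorem~\ref{6} collapses to exactly the two inequalities asserted in Corollary~\ref{8}, so nothing further is needed. The one point I would treat with care — and the only genuine content of the argument — is the equivalence in the second display: one must verify not merely that taking $\alpha=\max_t (a_f t+b_f)/f(t)$ forces $a_f t+b_f\le \alpha f(t)$ for all $t$ (which would only give $\beta\le0$), but that the maximum is actually attained, so that $\beta$ equals $0$ rather than being strictly negative; selecting $\alpha$ to be the maximum value itself, rather than any larger number, secures this. It is also reassuring to note that $\alpha\ge1$: since $f^{t}$ convex forces $f$ to be convex, as recorded in the chain \eqref{2} in the proof of Theorem~\ref{10}, we have $f(t)\le a_f t+b_f$ throughout $[m,M]$, whence $(a_f t+b_f)/f(t)\ge1$ and the resulting bound $\alpha f(\Phi(A))$ is a bona fide reversal of the Choi--Davis--Jensen estimate.
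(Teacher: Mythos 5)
Your argument is correct and is exactly the paper's route: the paper obtains Corollary~\ref{8} by the one-line remark that one should choose $\alpha$ in Theorem~\ref{6} so that $\beta=0$, and you have simply made that choice explicit and verified that the maximum is attained. Note only that your (correct) computation yields the chord $\frac{M-t}{M-m}f(m)+\frac{t-m}{M-m}f(M)$, so the resulting $\alpha$ has $M-m$ in the denominators, whereas the statement prints $M+m$ --- a typographical slip in the paper (inherited from its Theorem~C), not a flaw in your proof.
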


Let $0<t<1$ and let $1<\frac{1}{t}\le r$. Consider the function $f\left( t \right)={{t}^{r}}$. Then we have the following two corollaries.
\begin{corollary}\label{9}
Let $A\in \mathcal{B}\left( \mathcal{H} \right)$ be a self-adjoint operator with the spectra contained in the interval $\left[ m,M \right]$ with $0<m<M$, and let $\Phi :\mathcal{B}\left( \mathcal{H} \right)\to \mathcal{B}\left( \mathcal{K} \right)$ be a positive unital linear map. Then for any $1<\frac{1}{t}\le r$, $t\in \left( 0,1 \right)$
\[\begin{aligned}
   \Phi \left( {{A}^{r}} \right)&\le \Phi \left( {{\left( \frac{M{{\mathbf{1}}_{\mathcal{H}}}-A}{M-m}{{m}^{tr}}+\frac{A-m{{\mathbf{1}}_{\mathcal{H}}}}{M-m}{{M}^{tr}} \right)}^{\frac{1}{t}}} \right) \\
 & \le K\left( m,M,r \right){{\Phi }^{r}}\left( A \right),
\end{aligned}\]
where the generalized Kantorovich constant $K\left( m,M,r \right)$ (\cite[Definition 2.2]{book}) is defined by
\begin{equation}\label{12}
K\left( m,M,r \right)=\frac{\left( m{{M}^{r}}-M{{m}^{r}} \right)}{\left( r-1 \right)\left( M-m \right)}{{\left( \frac{r-1}{r}\frac{{{M}^{r}}-{{m}^{r}}}{m{{M}^{r}}-M{{m}^{r}}} \right)}^{r}}.
\end{equation}
In particular,
\[\begin{aligned}
   \Phi \left( {{A}^{r}} \right)&\le \Phi \left( {{\left( \frac{M{{\mathbf{1}}_{\mathcal{H}}}-A}{M-m}{{m}^{\frac{r}{2}}}+\frac{A-m{{\mathbf{1}}_{\mathcal{H}}}}{M-m}{{M}^{\frac{r}{2}}} \right)}^{2}} \right) \\
 & \le K\left( m,M,r \right){{\Phi }^{r}}\left( A \right),
\end{aligned}\]
for any $r\ge 2$.\par
The above inequalities also hold when $r<0$.
\end{corollary}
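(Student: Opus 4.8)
The plan is to read off Corollary \ref{9} as the case $f(x)=x^{r}$ of Corollary \ref{8}, i.e. of Theorem \ref{6} with $\alpha$ chosen so that $\beta=0$; the only genuine computation is to put the resulting constant into the closed Kantorovich form \eqref{12}. First I would verify the hypotheses on $[m,M]$ with $0<m<M$: the map $x\mapsto x^{r}$ is continuous and strictly positive, and $f^{t}(x)=x^{rt}$ is convex because the assumption $1<\tfrac1t\le r$ forces $rt\ge 1$. (When $r<0$ one instead has $rt<0$, so $x^{rt}$ is again convex and no constraint on $t\in(0,1)$ is needed, which is what lets the statement extend to negative $r$.) With $f^{t}(m)=m^{rt}$ and $f^{t}(M)=M^{rt}$, Theorem \ref{6} immediately produces the middle term $\Phi\big((\tfrac{M\mathbf 1_{\mathcal H}-A}{M-m}m^{rt}+\tfrac{A-m\mathbf 1_{\mathcal H}}{M-m}M^{rt})^{1/t}\big)$; moreover $A\ge m\mathbf 1_{\mathcal H}>0$ gives $\Phi(A)\ge m\mathbf 1_{\mathcal K}>0$, so $f(\Phi(A))=\Phi(A)^{r}=\Phi^{r}(A)$ is well defined, including for $r<0$.

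Next I would evaluate the constant. Forcing $\beta=0$ in Theorem \ref{6} means taking $\alpha=\max_{m\le s\le M}\frac{a_{f}s+b_{f}}{f(s)}$, and since $a_{f}s+b_{f}=\frac{M-s}{M-m}m^{r}+\frac{s-m}{M-m}M^{r}$ this is $\alpha=\max_{m\le s\le M}g(s)$ with $g(s)=(a_{f}s+b_{f})s^{-r}$, where $a_{f}=\frac{M^{r}-m^{r}}{M-m}$ and $-b_{f}=\frac{mM^{r}-Mm^{r}}{M-m}$. Differentiating gives a unique interior critical point $s_{0}=\frac{r}{r-1}\cdot\frac{-b_{f}}{a_{f}}=\frac{r}{r-1}\cdot\frac{mM^{r}-Mm^{r}}{M^{r}-m^{r}}$; convexity of $x^{r}$ shows $g\ge 1$ with equality only at $s=m,M$, so $s_{0}\in(m,M)$ is the maximizer. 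Using the simplification $a_{f}s_{0}+b_{f}=\frac{-b_{f}}{r-1}$ I would obtain $g(s_{0})=\frac{-b_{f}}{r-1}\big(\frac{r-1}{r}\cdot\frac{a_{f}}{-b_{f}}\big)^{r}$, and substituting the values of $a_{f},b_{f}$ this is exactly $K(m,M,r)$ as in \eqref{12}. I expect this identification to be the main obstacle: it is an elementary but delicate optimization in which the algebra must be organized so that the maximum collapses to the stated product, and one must separately check that $s_{0}$ stays in $[m,M]$ and that the same formula survives the sign reversals of $a_{f},b_{f},r-1$ when $r<0$.

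Finally, the ``in particular'' inequality comes from specializing $t=\tfrac12$, which is permitted exactly when $\tfrac1t=2\le r$, i.e. $r\ge 2$; then $rt=\tfrac r2$ and $1/t=2$, giving the displayed squared form with the same constant $K(m,M,r)$. The assertion for $r<0$ needs no new idea: one re-runs the two steps above using convexity of $x^{rt}$ for $rt<0$, the invertibility of $\Phi(A)$, and the fact that the critical-point and constant formulas are sign-robust, so both the reduction and the evaluation of $K(m,M,r)$ go through unchanged.
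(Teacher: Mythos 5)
Your proposal is correct and follows exactly the route the paper intends: Corollary \ref{9} is obtained by specializing Corollary \ref{8} (Theorem \ref{6} with $\beta=0$) to $f(x)=x^{r}$, using $1<\tfrac1t\le r$ to get convexity of $x^{rt}$, and identifying the resulting maximum with $K(m,M,r)$. The only difference is that you carry out the optimization yielding \eqref{12} explicitly (correctly, with the secant line written with denominator $M-m$ rather than the paper's typographical $M+m$), whereas the paper simply cites \cite[Definition 2.2]{book} for that identification.
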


\begin{remark}
It follows from Corollary \ref{9} that
\begin{equation}\label{16}
\Phi \left( {{A}^{-1}} \right)\le \Phi \left( {{\left( \frac{M{{\mathbf{1}}_{\mathcal{H}}}-A}{M-m}{{m}^{-t}}+\frac{A-m{{\mathbf{1}}_{\mathcal{H}}}}{M-m}{{M}^{-t}} \right)}^{\frac{1}{t}}} \right)\le \frac{{{\left( M+m \right)}^{2}}}{4Mm}{{\Phi }^{-1}}\left( A \right),
\end{equation}
since $K\left( m,M,-1 \right)=\frac{{{\left( M+m \right)}^{2}}}{4Mm}$. Integrating the inequality \eqref{9} over $t\in \left[ 0,1 \right]$, we find that
	\[\Phi \left( {{A}^{-1}} \right)\le \int_{0}^{1}{\Phi \left( {{\left( \frac{M{{\mathbf{1}}_{\mathcal{H}}}-A}{M-m}{{m}^{-t}}+\frac{A-m{{\mathbf{1}}_{\mathcal{H}}}}{M-m}{{M}^{-t}} \right)}^{\frac{1}{t}}} \right)dt}\le \frac{{{\left( M+m \right)}^{2}}}{4Mm}{{\Phi }^{-1}}\left( A \right).\]
Since the mapping $\Phi $ is linear and continuous, then
\begin{equation}\label{17}
\Phi \left( {{A}^{-1}} \right)\le \Phi \left( \int_{0}^{1}{{{\left( \frac{M{{\mathbf{1}}_{\mathcal{H}}}-A}{M-m}{{m}^{-t}}+\frac{A-m{{\mathbf{1}}_{\mathcal{H}}}}{M-m}{{M}^{-t}} \right)}^{\frac{1}{t}}}dt} \right)\le \frac{{{\left( M+m \right)}^{2}}}{4Mm}{{\Phi }^{-1}}\left( A \right).
\end{equation}
Observe that the inequality \eqref{17} gives a refinement of the operator Kantorovich inequality \cite{kantorovich}.
\end{remark}

\begin{corollary}
Let the hypothesis of Corollary \ref{8} be satisfied. Then
\[\begin{aligned}
   \Phi \left( {{A}^{r}} \right)&\le \Phi \left( {{\left( \frac{M{{\mathbf{1}}_{\mathcal{H}}}-A}{M-m}{{m}^{tr}}+\frac{A-m{{\mathbf{1}}_{\mathcal{H}}}}{M-m}{{M}^{tr}} \right)}^{\frac{1}{t}}} \right) \\
 & \le C\left( m,M,r \right){{\mathbf{1}}_{\mathcal{K}}}+{{\Phi }^{r}}\left( A \right),
\end{aligned}\]
where the Kantorovich constant for the difference $C\left( m,M,r \right)$ (\cite[Theorem 2.58]{book}) is
defined by
\begin{equation}\label{13}
C\left( m,M,r \right)=\frac{M{{m}^{r}}-m{{M}^{r}}}{M-m}+\left( r-1 \right){{\left( \frac{{{M}^{r}}-{{m}^{r}}}{r\left( M-m \right)} \right)}^{\frac{r}{r-1}}}.
\end{equation}
In particular,
\[\begin{aligned}
   \Phi \left( {{A}^{r}} \right)&\le \Phi \left( {{\left( \frac{M{{\mathbf{1}}_{\mathcal{H}}}-A}{M-m}{{m}^{\frac{r}{2}}}+\frac{A-m{{\mathbf{1}}_{\mathcal{H}}}}{M-m}{{M}^{\frac{r}{2}}} \right)}^{2}} \right) \\
 & \le C\left( m,M,r \right){{\mathbf{1}}_{\mathcal{K}}}+{{\Phi }^{r}}\left( A \right)
\end{aligned}\]
for any $r\ge 2$.\par
The above inequalities also hold when $r<0$.
\end{corollary}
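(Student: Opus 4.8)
The plan is to specialize the additive reverse estimate of Corollary \ref{7} to the power function $f(x)=x^{r}$ on the interval $[m,M]\subset(0,\infty)$ and then to identify the resulting constant $\beta$ with the Kantorovich difference constant $C(m,M,r)$ of \eqref{13}.

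First I would verify that this choice of $f$ meets the hypotheses. Since $1<\tfrac1t\le r$ forces $rt\ge 1$, the function $f^{t}(x)=x^{rt}$ is convex on $[m,M]$, so Corollary \ref{7} is applicable. Reading off $f^{t}(m)=m^{rt}$ and $f^{t}(M)=M^{rt}$ reproduces the middle term verbatim, while $f(\Phi(A))=\Phi^{r}(A)$ supplies the right-hand term; it then remains only to evaluate the constant $\beta$.

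Second, I would compute $\beta$ explicitly. Here $a_{f}=(M^{r}-m^{r})/(M-m)$ and $b_{f}=(Mm^{r}-mM^{r})/(M-m)$, so that $\beta=\max_{m\le s\le M}\{a_{f}s+b_{f}-s^{r}\}$. For $r>1$ the map $s\mapsto a_{f}s+b_{f}-s^{r}$ is concave, its stationary point $s_{*}=(a_{f}/r)^{1/(r-1)}$ lies in $(m,M)$, and substituting $s_{*}^{\,r-1}=a_{f}/r$ collapses the extremal value to $b_{f}+(r-1)(a_{f}/r)^{r/(r-1)}$, which is precisely $C(m,M,r)$ as defined in \eqref{13}. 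This optimization — locating $s_{*}$, confirming that it falls in $[m,M]$, and simplifying the critical value — is the only genuine computation and the step most prone to algebraic slips, so I would treat it as the main obstacle.

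Finally, I would handle the special form and the case $r<0$. Taking $t=\tfrac12$ is admissible exactly when $\tfrac1t=2\le r$, i.e. $r\ge 2$; then $(\cdot)^{1/t}=(\cdot)^{2}$ and $m^{tr}=m^{r/2}$, $M^{tr}=M^{r/2}$, yielding the displayed quadratic version. When $r<0$ the function $f(x)=x^{r}$ is still positive on $[m,M]$ and $f^{t}(x)=x^{rt}$ with $rt<0$ remains convex on $(0,\infty)$, so Corollary \ref{7} again applies; the objective $s\mapsto a_{f}s+b_{f}-s^{r}$ stays concave (as $x^{r}$ is convex for $r<0$), so the same stationary-point argument returns $\beta=C(m,M,r)$, with formula \eqref{13} interpreted verbatim, consistent with \cite[Theorem 2.58]{book}.
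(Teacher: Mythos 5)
Your proposal is correct and follows exactly the route the paper intends: the paper states this corollary without proof, as the specialization $f(x)=x^{r}$ of the additive estimate (Corollary \ref{7}), citing \cite[Theorem 2.58]{book} for the identification $\beta=C(m,M,r)$. You supply the same specialization and additionally carry out the concavity/stationary-point computation showing $\max_{m\le s\le M}\{a_{f}s+b_{f}-s^{r}\}=b_{f}+(r-1)(a_{f}/r)^{r/(r-1)}$, which correctly reproduces \eqref{13}, so your write-up is a legitimate (and more self-contained) version of the paper's argument.
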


\begin{example}
Letting $t={1}/{2}\;$ and $\Phi \left( T \right)=\frac{1}{2}tr\left( T \right)$. Consider $A=\left[ \begin{matrix}
   2 & -1  \\
   -1 & 3  \\
\end{matrix} \right]$. Then, of course, we can choose $m=1.35$ and $M=3.8$. Simple calculations show that,
	\[\Phi \left( {{A}^{-1}} \right)=0.5,\]
	\[\Phi \left( {{\left( \frac{M\mathbf{1}_\mathcal{H}-A}{M-m}\frac{1}{\sqrt{m}}+\frac{A-m\mathbf{1}_\mathcal{H}}{M-m}\frac{1}{\sqrt{M}} \right)}^{2}} \right)\approx 0.51,\]
	\[\frac{{{\left( M+m \right)}^{2}}}{4Mm}{{\Phi }^{-1}}\left( A \right)\approx 0.517.\]
Consequently,
\[\begin{aligned}
   \Phi \left( {{A}^{-1}} \right)&\lneqq \Phi \left( {{\left( \frac{M\mathbf{1}_\mathcal{H}-A}{M-m}\frac{1}{\sqrt{m}}+\frac{A-m\mathbf{1}_\mathcal{H}}{M-m}\frac{1}{\sqrt{M}} \right)}^{2}} \right) \\
 & \lneqq \frac{{{\left( M+m \right)}^{2}}}{4Mm}{{\Phi }^{-1}}\left( A \right).
\end{aligned}\]
\end{example}

Remind that the function $f\left( t \right)={{t}^{r}}$ for $r>1$ is not operator monotone on $\left[ 0,\infty  \right)$. In the sense that $A\le B$ does not always ensure ${{A}^{r}}\le {{B}^{r}}$. Related to this problem, Furuta \cite{n3} proved: 	Let $A,B\in \mathbb{B}\left( \mathcal{H} \right)$ be two positive operators such that their spectrums contained in the interval $\left[ m,M \right]$, for some scalars $0<m<M$. If $A\le B$, then
\begin{equation}\label{15}
{{A}^{p}}\le {{K}}\left( m,M,r \right){{B}^{r}}\quad\text{ for }r\ge 1.
\end{equation}
Next, we present a better estimate than Furuta inequality \eqref{15}. To this end, we recall the following operator version of Jensen's inequality which is shown by Mond and Pe\v cari\'c in \cite[Theorem 1.2]{book}.
\begin{lemma}
Let $A\in \mathcal{B}\left( \mathcal{H} \right)$ be a self-adjoint operator with the spectra contained in the interval $J$ and let $x\in \mathcal{H}$. If $f:J\to \mathbb{R}$ is convex function, then
\[f\left( \left\langle Ax,x \right\rangle  \right)\le \left\langle f\left( A \right)x,x \right\rangle.\]
\end{lemma}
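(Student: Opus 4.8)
The plan is to reduce this operator statement to a purely scalar convexity fact, using the supporting-line characterization of convex functions together with the order-preserving property of the continuous functional calculus. Throughout I would work under the (implicit) normalization $\|x\|=1$; for a general nonzero $x$ one simply rescales to $x/\|x\|$. The first step is to observe that since $A$ is self-adjoint with $Sp(A)\subseteq J$, its spectrum lies in some compact subinterval $[m,M]\subseteq J$, and the scalar
\[ s_0:=\langle Ax,x\rangle \]
satisfies $m=m\langle x,x\rangle\le\langle Ax,x\rangle\le M\langle x,x\rangle=M$, so that $s_0\in[m,M]\subseteq J$ and $f(s_0)$ is well defined.

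Next I would invoke convexity of $f$ to produce a supporting line at $s_0$: there exist real constants $\alpha,\beta$ with
\[ \alpha s+\beta\le f(s)\quad (s\in J),\qquad \alpha s_0+\beta=f(s_0). \]
For differentiable $f$ one may take $\alpha=f'(s_0)$ and $\beta=f(s_0)-\alpha s_0$; in general $\alpha$ is any subgradient of $f$ at $s_0$, which exists because $f$ is convex on the interval $J$. The point of this step is that it linearizes $f$ from below while matching its value exactly at $s_0$.

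I would then pass to operators. Since $\alpha s+\beta\le f(s)$ holds for every $s$ in $Sp(A)$, the continuous functional calculus (which preserves the order between real-valued functions that are comparable on the spectrum) yields the operator inequality
\[ \alpha A+\beta\mathbf{1}_{\mathcal{H}}\le f(A). \]
Taking the inner product against the unit vector $x$ and using linearity of the inner product together with $\langle x,x\rangle=1$ gives
\[ f\big(\langle Ax,x\rangle\big)=\alpha s_0+\beta=\big\langle(\alpha A+\beta\mathbf{1}_{\mathcal{H}})x,x\big\rangle\le\langle f(A)x,x\rangle, \]
which is exactly the claimed inequality.

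The main obstacle is the construction and justification of the supporting line when $f$ is merely convex rather than differentiable, and in ensuring that $s_0$ admits such a line even when it happens to be an endpoint of $J$; in the endpoint case the one-sided difference quotients still furnish a supporting half-line, which is all that is needed. Everything else—the inclusion $s_0\in J$, the order preservation of the functional calculus, and the normalization $\|x\|=1$—is routine. As an alternative conceptual route, one could instead apply the spectral theorem to write $\langle Ax,x\rangle=\int_J s\,d\mu(s)$ and $\langle f(A)x,x\rangle=\int_J f(s)\,d\mu(s)$ for the probability measure $\mu(\cdot)=\langle E(\cdot)x,x\rangle$, whereupon the statement becomes classical Jensen's inequality for $\mu$; but the supporting-line argument above is more self-contained and stays within the functional-calculus framework already used in the paper.
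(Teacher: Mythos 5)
The paper does not actually prove this lemma: it is quoted as the Mond--Pe\v{c}ari\'{c} operator Jensen inequality with a citation to \cite[Theorem 1.2]{book}, so there is no in-paper argument to compare yours against. Your supporting-line proof is the standard one and is essentially correct. You rightly note that the statement must be read with $\|x\|=1$ (as literally written, for arbitrary $x$, it is false); with that normalization, $s_{0}=\langle Ax,x\rangle\in[m,M]\subseteq J$, a subgradient at $s_{0}$ gives $\alpha s+\beta\le f(s)$ on $Sp(A)$ with equality at $s_{0}$, and order-preservation of the functional calculus together with linearity of $\langle\,\cdot\,x,x\rangle$ finishes the argument. The alternative route you sketch, Jensen's inequality for the probability measure $\langle E(\cdot)x,x\rangle$, is equally valid.

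One point deserves more care than your closing remark gives it. If $s_{0}$ coincides with an endpoint of $J$ at which the one-sided derivative of $f$ is infinite (e.g.\ $f(s)=-\sqrt{s}$ on $[0,1]$ with $s_{0}=0$), then no finite supporting line, and no finite supporting half-line either, exists: the infimum of the difference quotients $\bigl(f(s)-f(s_{0})\bigr)/(s-s_{0})$ over $s>s_{0}$ is $-\infty$, so no single finite slope $\alpha$ works for all of $Sp(A)$. The case is not fatal, but it needs its own short argument rather than the assertion that difference quotients ``still furnish a supporting half-line'': if $\langle Ax,x\rangle=m$ and $A\ge m\mathbf{1}_{\mathcal{H}}$, then $\langle(A-m\mathbf{1}_{\mathcal{H}})x,x\rangle=0$ forces $(A-m\mathbf{1}_{\mathcal{H}})^{1/2}x=0$, hence $Ax=mx$, so $\langle f(A)x,x\rangle=f(m)=f(s_{0})$ and the inequality holds with equality. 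With that patch (and the symmetric one at the right endpoint) the proof is complete.
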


\begin{theorem}\label{14}
Let $A,B\in \mathcal{B}\left( \mathcal{H} \right)$ be two self-adjoint operators with the spectra contained in the interval $\left[ m,M \right]$ with $m<M$, and let $\Phi :\mathcal{B}\left( \mathcal{H} \right)\to \mathcal{B}\left( \mathcal{K} \right)$ be a positive unital linear map. If $f:\left[ m,M \right]\to \left( 0,\infty  \right)$ is a continuous increasing function  such that ${{f}^{t}}$ $t\in \left( 0,1 \right)$ is convex, then for a given positive real number $\alpha$
\[\begin{aligned}
   f\left( A \right)&\le {{\left( \frac{M\mathbf{1}_\mathcal{H}-A}{M-m}{{f}^{t}}\left( m \right)+\frac{A-m\mathbf{1}_\mathcal{H}}{M-m}{{f}^{t}}\left( M \right) \right)}^{\frac{1}{t}}} \\
 & \le \beta {{\mathbf{1}}_{\mathcal{H}}}+\alpha f\left( B \right),
\end{aligned}\]
holds for $\beta =\underset{m\le t\le M}{\mathop{\max }}\,\left\{ {{a}_{f}}t+{{b}_{f}}-\alpha f\left( t \right) \right\}$.
\end{theorem}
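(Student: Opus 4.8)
The plan is to transport a pair of scalar estimates to the operator level via the continuous functional calculus, in the spirit of the proof of Theorem \ref{6}; the only new feature is that the chain begins on $A$ and terminates on $B$. I first record the scalar inequality \eqref{11}: for every $z\in[m,M]$,
\[
f(z)\le\left(\frac{M-z}{M-m}f^{t}(m)+\frac{z-m}{M-m}f^{t}(M)\right)^{\frac{1}{t}}\le a_{f}z+b_{f},
\]
where the affine majorant is just the rewriting $\frac{M-z}{M-m}f(m)+\frac{z-m}{M-m}f(M)=a_{f}z+b_{f}$. Applying the functional calculus to $A$, whose spectrum lies in $[m,M]$, turns this into
\[
f(A)\le\left(\frac{M\mathbf{1}_{\mathcal{H}}-A}{M-m}f^{t}(m)+\frac{A-m\mathbf{1}_{\mathcal{H}}}{M-m}f^{t}(M)\right)^{\frac{1}{t}}\le a_{f}A+b_{f}\mathbf{1}_{\mathcal{H}},
\]
which already delivers the first asserted inequality.

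Next I would exploit that $f$ is increasing. Since $m<M$, the slope $a_{f}=\frac{f(M)-f(m)}{M-m}$ is nonnegative, so the operator order $A\le B$ is preserved under multiplication by $a_{f}$, giving $a_{f}A+b_{f}\mathbf{1}_{\mathcal{H}}\le a_{f}B+b_{f}\mathbf{1}_{\mathcal{H}}$. The reason for routing the estimate through the affine bound $a_{f}(\cdot)+b_{f}$ is precisely that it is \emph{linear}, hence order preserving, whereas $f$ itself need not be operator monotone; this is exactly what lets the comparison jump from $A$ to $B$.

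Finally I would feed in the definition of $\beta$. By construction $\beta=\max_{m\le s\le M}\{a_{f}s+b_{f}-\alpha f(s)\}$, so the scalar inequality $a_{f}s+b_{f}\le\beta+\alpha f(s)$ holds for all $s\in[m,M]$; applying the functional calculus to $B$ yields $a_{f}B+b_{f}\mathbf{1}_{\mathcal{H}}\le\beta\mathbf{1}_{\mathcal{H}}+\alpha f(B)$, the step being legitimate because the spectrum of $B$ also lies in $[m,M]$ and $\alpha>0$. Concatenating the three displays closes the chain
\[
\left(\frac{M\mathbf{1}_{\mathcal{H}}-A}{M-m}f^{t}(m)+\frac{A-m\mathbf{1}_{\mathcal{H}}}{M-m}f^{t}(M)\right)^{\frac{1}{t}}\le a_{f}A+b_{f}\mathbf{1}_{\mathcal{H}}\le a_{f}B+b_{f}\mathbf{1}_{\mathcal{H}}\le\beta\mathbf{1}_{\mathcal{H}}+\alpha f(B),
\]
and the theorem follows.

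The one delicate point — and the reason the hypotheses are shaped as they are — is the passage from $A$ to $B$. Because $f$ is not assumed operator monotone, $f(A)$ and $f(B)$ cannot be compared directly, so the argument genuinely needs the two operators to be comparable, i.e. it requires the order hypothesis $A\le B$ suggested by the Furuta-type result \eqref{15} preceding the statement. That this assumption is essential is seen by taking $A=M\mathbf{1}_{\mathcal{H}}$ and $B=m\mathbf{1}_{\mathcal{H}}$, which violates the conclusion whenever $f$ is strictly increasing. Granting $A\le B$, everything else is a routine transfer of the already-established scalar estimates through the functional calculus, with the positivity of $\alpha$ and the common spectral interval $[m,M]$ guaranteeing that each operator inequality is valid.
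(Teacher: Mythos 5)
Your proof is correct, but it takes a genuinely different (and in fact cleaner) route than the paper's. The paper works at the level of quadratic forms: it fixes a unit vector $x$, derives $\langle f(A)x,x\rangle\le\langle(\cdot)^{1/t}x,x\rangle\le a_f\langle Ax,x\rangle+b_f$, uses $\langle Ax,x\rangle\le\langle Bx,x\rangle$ together with the monotonicity of $f$ to replace $f(\langle Bx,x\rangle)$ by $f(\langle Ax,x\rangle)$, bounds the resulting scalar expression by $\beta$ since $\langle Ax,x\rangle\in[m,M]$, and finally invokes the Mond--Pe\v cari\'c lemma $f(\langle Bx,x\rangle)\le\langle f(B)x,x\rangle$ (hence the convexity of $f$) to return to an operator inequality. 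You instead stay at the operator level throughout: functional calculus on $A$ gives $f(A)\le(\cdot)^{1/t}\le a_fA+b_f\mathbf{1}_{\mathcal H}$; the nonnegativity of $a_f$ (from $f$ increasing) lets $A\le B$ pass through the affine majorant; and the scalar bound $a_fs+b_f\le\beta+\alpha f(s)$ on $[m,M]$ is transferred to $B$ by functional calculus. This avoids the inner-product detour and does not even need the convexity of $f$ in the last step, only its continuity and the definition of $\beta$. You are also right to flag that the order hypothesis $A\le B$ is indispensable and is missing from the statement as printed: the paper's own proof invokes it explicitly (``since $A\le B$ and $f$ is increasing''), inheriting it silently from the Furuta-type context \eqref{15}, and your example $A=M\mathbf{1}_{\mathcal H}$, $B=m\mathbf{1}_{\mathcal H}$ does show the conclusion fails without it.
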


\begin{proof}
Our assumption implies
\[\begin{aligned}
  & f\left( A \right)\le {{\left( \frac{M\mathbf{1}_\mathcal{H}-A}{M-m}{{f}^{t}}\left( m \right)+\frac{A-m\mathbf{1}_\mathcal{H}}{M-m}{{f}^{t}}\left( M \right) \right)}^{\frac{1}{t}}} \\
 & \le {{a}_{f}}A+{{b}_{f}}\mathbf{1}_\mathcal{H}.
\end{aligned}\]
thanks to \eqref{11}. Then, for any unit vector $x \in \mathcal{H}$
\[\begin{aligned}
  & \left\langle f\left( A \right)x,x \right\rangle  \\
 & \le \left\langle {{\left( \frac{M\mathbf{1}_\mathcal{H}-A}{M-m}{{f}^{t}}\left( m \right)+\frac{A-m\mathbf{1}_\mathcal{H}}{M-m}{{f}^{t}}\left( M \right) \right)}^{\frac{1}{t}}}x,x \right\rangle  \\
 & \le {{a}_{f}}\left\langle Ax,x \right\rangle +{{b}_{f}}.
\end{aligned}\]
Therefore,
\[\begin{aligned}
  & \left\langle f\left( A \right)x,x \right\rangle -\alpha f\left( \left\langle Bx,x \right\rangle  \right) \\
 & \le \left\langle {{\left( \frac{M\mathbf{1}_\mathcal{H}-A}{M-m}{{f}^{t}}\left( m \right)+\frac{A-m\mathbf{1}_\mathcal{H}}{M-m}{{f}^{t}}\left( M \right) \right)}^{\frac{1}{t}}}x,x \right\rangle -\alpha f\left( \left\langle Bx,x \right\rangle  \right) \\
 & \le {{a}_{f}}\left\langle Ax,x \right\rangle +{{b}_{f}}-\alpha f\left( \left\langle Bx,x \right\rangle  \right) \\
 & \le {{a}_{f}}\left\langle Ax,x \right\rangle +{{b}_{f}}-\alpha f\left( \left\langle Ax,x \right\rangle  \right) \\
 & \le \underset{m\le t\le M}{\mathop{\max }}\,\left\{ {{a}_{f}}t+{{b}_{f}}-\alpha f\left( t \right) \right\}.
\end{aligned}\]
On the other hand, since $A\le B$ and $f$ is increasing and convex, then
\[\begin{aligned}
   \left\langle f\left( A \right)x,x \right\rangle &\le \left\langle {{\left( \frac{M\mathbf{1}_\mathcal{H}-A}{M-m}{{f}^{t}}\left( m \right)+\frac{A-m\mathbf{1}_\mathcal{H}}{M-m}{{f}^{t}}\left( M \right) \right)}^{\frac{1}{t}}}x,x \right\rangle  \\
 & \le \beta +\alpha f\left( \left\langle Bx,x \right\rangle  \right) \\
 & \le \beta +\alpha \left\langle f\left( B \right)x,x \right\rangle  \\
 & =\left\langle \left( \beta {{\mathbf{1}}_{\mathcal{H}}}+\alpha f\left( B \right) \right)x,x \right\rangle.
\end{aligned}\]
This completes the proof.
\end{proof}

As a direct consequence of Theorem \ref{14}, we have:
\begin{corollary}
Let $A,B\in \mathcal{B}\left( \mathcal{H} \right)$ be two self-adjoint operators with the spectra contained in the interval $\left[ m,M \right]$ with $0<m<M$. If $A\le B$, then for any $1<\frac{1}{t}\le r$, $t\in \left( 0,1 \right)$
	\[\begin{aligned}
   {{A}^{r}}&\le {{\left( \frac{M\mathbf{1}_\mathcal{H}-A}{M-m}{{m}^{tr}}+\frac{A-m\mathbf{1}_\mathcal{H}}{M-m}{{M}^{tr}} \right)}^{\frac{1}{t}}} \\
 & \le K\left( m,M,r \right){{B}^{r}}
\end{aligned}\]
and
	\[\begin{aligned}
   {{A}^{r}}&\le {{\left( \frac{M\mathbf{1}_\mathcal{H}-A}{M-m}{{m}^{tr}}+\frac{A-m\mathbf{1}_\mathcal{H}}{M-m}{{M}^{tr}} \right)}^{\frac{1}{t}}} \\
 & \le C\left( m,M,r \right)\mathbf{1}_\mathcal{H}+{{B}^{r}} \\
\end{aligned}\]
where $K\left( m,M,r \right)$ and $C\left( m,M,r \right)$ are defined as in \eqref{12} and \eqref{13}, respectively.
\end{corollary}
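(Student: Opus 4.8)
The plan is to obtain both chains as specializations of Theorem \ref{14} to the power function $f(s)=s^{r}$, with the two chains corresponding to two canonical choices of the free parameter $\alpha$. First I would check that $f(s)=s^{r}$ meets every hypothesis of Theorem \ref{14} on $[m,M]\subset(0,\infty)$: since $r\ge\frac1t>1>0$, the function $f$ is continuous and strictly increasing, and $f^{t}(s)=s^{rt}$ is convex because $rt\ge1$ (which is precisely the assumption $\frac1t\le r$); moreover $\mathrm{Sp}(B)\subseteq[m,M]$ with $m>0$ makes $B$ positive invertible, so $B^{r}$ is well defined by functional calculus. With this choice $f(A)=A^{r}$, $f(B)=B^{r}$, $f^{t}(m)=m^{tr}$ and $f^{t}(M)=M^{tr}$, so the middle operator term in Theorem \ref{14} is exactly the middle term of both displayed chains. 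Hence, for each admissible $\alpha$, Theorem \ref{14} already yields
\[
A^{r}\le\left(\frac{M\mathbf{1}_{\mathcal{H}}-A}{M-m}m^{tr}+\frac{A-m\mathbf{1}_{\mathcal{H}}}{M-m}M^{tr}\right)^{\frac{1}{t}}\le\beta\mathbf{1}_{\mathcal{H}}+\alpha B^{r},
\]
where $\beta=\max_{m\le s\le M}\{a_{f}s+b_{f}-\alpha f(s)\}$ and I have written $s$ for the dummy variable to avoid collision with the fixed exponent parameter $t\in(0,1)$.

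The two chains then come from two choices of $\alpha$. For the chain with $K(m,M,r)$ I would pick $\alpha$ so that $\beta=0$; since $a_{f}s+b_{f}$ is exactly the chord of $s\mapsto s^{r}$ through $(m,m^{r})$ and $(M,M^{r})$, the requirement $a_{f}s+b_{f}\le\alpha s^{r}$ on $[m,M]$ forces $\alpha=\max_{m\le s\le M}(a_{f}s+b_{f})s^{-r}$, which is finite and positive. For the chain with $C(m,M,r)$ I would instead set $\alpha=1$, leaving $\beta=\max_{m\le s\le M}\{a_{f}s+b_{f}-s^{r}\}$. In both cases the inequality chain is immediate from the display above; the content is to evaluate the two extrema in closed form.

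This closed-form evaluation is the only real computation, and is where I expect the work to lie. For the ratio, differentiating $s\mapsto(a_{f}s+b_{f})s^{-r}$ and solving $a_{f}(1-r)s=rb_{f}$ locates the interior maximizer $s_{0}=\frac{r}{r-1}\,\frac{mM^{r}-Mm^{r}}{M^{r}-m^{r}}$; substituting $s_{0}$ back collapses the maximum to exactly the generalized Kantorovich constant $K(m,M,r)$ of \eqref{12}. For the difference, differentiating $s\mapsto a_{f}s+b_{f}-s^{r}$ gives $s_{1}=\bigl(\tfrac{a_{f}}{r}\bigr)^{1/(r-1)}=\left(\frac{M^{r}-m^{r}}{r(M-m)}\right)^{1/(r-1)}$, and using $s_{1}^{r-1}=a_{f}/r$ to simplify $a_{f}s_{1}-s_{1}^{r}=(r-1)\bigl(\tfrac{a_{f}}{r}\bigr)^{r/(r-1)}$ and then adding $b_{f}=\frac{Mm^{r}-mM^{r}}{M-m}$ reproduces $C(m,M,r)$ as in \eqref{13}. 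These are the standard Mond--Pe\v cari\'c optimizations recorded in \cite{book}, so once the constants are identified the two chains follow at once, completing the proof.
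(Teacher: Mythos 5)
Your proposal is correct and follows exactly the route the paper intends: the corollary is stated as a direct consequence of Theorem \ref{14} with $f(s)=s^{r}$, taking $\alpha$ with $\beta=0$ to produce $K(m,M,r)$ and $\alpha=1$ to produce $C(m,M,r)$, mirroring the pattern of Corollaries \ref{7}--\ref{9}. Your explicit optimization of the two extrema (which the paper delegates to the cited book of Furuta--Mi\'ci\'c-Hot--Pe\v cari\'c--Seo) checks out and correctly recovers the constants in \eqref{12} and \eqref{13}.
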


\section*{Acknowledgements}
The authors would like to thank the anonymous referees for their comments and suggestions on preliminary versions of this
paper, which have led to a substantial improvement in its readability.
This work was partially supported by the Islamic Azad University, Central Tehran Branch.

\end{document}